\newcommand{\scvx}{\texttt{SCvx}}
\newcommand{\longtitle}{Remarks on ``Successive Convexification: A Superlinearly Convergent Algorithm for Non-convex Optimal Control Problems''}
\newtheoremstyle{plain}
  {}   
  {}   
  {\itshape}  
  {}       
  {\mdseries\scshape} 
  {.}         
  { } 
  {\thmname{#1}\thmnumber{ #2}\ifx#3\empty\else\ (#3)\fi}
\theoremstyle{plain}
\newtheorem{theorem}{\underline{Theorem}}
\newtheorem{lemma}[theorem]{\underline{Lemma}}
\newtheoremstyle{definition}
  {}   
  {}   
  {}  
  {}       
  {\mdseries\scshape} 
  {.}         
  { } 
  {\thmname{#1}\thmnumber{ #2}\ifx#3\empty\else\ (#3)\fi}
\theoremstyle{definition}
\newtheorem{remark}[theorem]{\underline{Remark}}
\newtheorem{assumption}[theorem]{\underline{Assumption}}
\colorlet{RedOrange}{red!70!orange}
\colorlet{DullRed}{red!40!white!70!black}
\colorlet{PalePurple}{purple!30!blue!40!gray}
\newcommand{\behcet}{Beh\c{c}et}
\newcommand{\acikmese}{A\c{c}\i kme\c{s}e}
\title{\longtitle}
\author[1]{Dayou Luo\thanks{Email: dayoul@uw.edu}}
\author[2]{Purnanand Elango\thanks{Email: pelango@uw.edu}}
\author[2]{\behcet \acikmese \thanks{Email: behcet@uw.edu}}
\affil[1]{Department of Applied Mathematics}
\affil[2]{William E. Boeing Department of Aeronautics and Astronautics}
\affil[ ]{University of Washington, Seattle, WA, 98195}
\date{}
\begin{document}

\maketitle

\begin{abstract}
The purpose of this note is to highlight and address inaccuracies in the convergence guarantees of {\scvx}, a nonconvex trajectory optimization algorithm proposed in \citet{mao2018scvx}, and make connections to relevant prior work. Specifically, we identify errors in the convergence proof within \citet{mao2018scvx} and reestablish the proof of convergence by employing a new method under stricter assumptions. 
\end{abstract}

\defcitealias{mao2018scvx}{SCvx} 

\section{Introduction}
This note identifies and remedies mistakes in \citet{mao2018scvx}. A sequential convex programming (SCP) algorithm, called {\scvx}, for solving discrete-time nonlinear optimal control problems is proposed in \citet{mao2018scvx}, which first converts the original constrained optimization problem into an unconstrained problem through the use of exact penalty \cite{han1979exact}, and then computes a stationary point of the unconstrained problem via a trust-region method. Unlike typical trust-region methods \cite[Chapter 4]{nocedal1999numerical}, the proposed approach does not utilize any second-order information such as the Hessian of the Lagrangian; it relies solely on first-order information, which can potentially improve the overall computational effort. %

\citet{mao2018scvx} demonstrated that {\scvx} ensures stationarity for unconstrained problems at accumulation points of the iterates, referred to as weak convergence. \citet{mao2018scvx} also showed that {\scvx} ensures strong convergence, meaning the sequence of iterates will converge to a single point. Such a convergence achieves a superlinear rate under specific conditions.

There are close connections between {\scvx} and prior work on trust-region methods from the 1980s. \citet{zhang1985improved} proposed an algorithm similar to {\scvx} and proved its global convergence, while \citet{zhang1989superlinear} demonstrated that, given strong convergence, the algorithm exhibited superlinear convergence rate under certain conditions. The main contribution of \citet{mao2018scvx} is the proof of strong convergence, which serves as the link between the results established in \cite{zhang1985improved} and \cite{zhang1989superlinear}.

The authors of this note, however, identified errors in the proof of strong convergence of {\scvx} that are challenging to rectify. This note provides a proof of strong convergence for {\scvx}, under the assumptions made by \citet{zhang1989superlinear}, which are more stringent than the original assumptions made by \citet{mao2018scvx}. Furthermore, invoking the assumptions of \cite{zhang1989superlinear} also provides the superlinear convergence rate for {\scvx}.

Given the relatively stringent assumptions needed for the results of this note, future work can explore strong convergence of {\scvx} under weaker conditions. Additionally, recognizing that the assumptions needed for superlinear convergence rate are uncommon in real-world applications, future work could be on the characterization of the convergence rate under weaker, more practical assumptions.

\section{Errors and Corrections in \texorpdfstring{\citet{mao2018scvx}}{citemaoscvx}}

In this section, we enumerate the errors and ambiguities in \citet{mao2018scvx}. We have categorized these errors into two types: those that have a straightforward remedy, and those that do not. The following list shows the first type of errors along with their corresponding remedies. Note that the page numbers referenced and the literature cited correspond to those in the appended text of \citet{mao2018scvx}.

\begin{itemize}
    \item Definition (3.10), on page 11, is an incorrect definition for the general subdifferential $\partial J(\bar z)$. The correct definition should be:
    \[
    \partial J(\bar z) = \{\eta \mid \eta^\top s \leq d J(\bar{z}, s) \text{ for all } s\}.
    \]
    \item Theorem 3.9, on page 12, makes an inaccurate reference to the exact penalty result in [24] by assuming $\bar{z}$ to be a KKT point. The accurate version should replace the term ``KKT point'' by ``local minimizer'' in the first part of the statement.
   
    \item In equation (3.19), on page 14, the second inequality is incorrect because of the ambiguity in the sign of $o(r)$. Taking the absolute value of $o(r)$ should rectify this error.
    
    \item The reference to the Bolzano-Weierstrass Theorem in Theorem 3.13, on page 14, is incorrect. By the definitions of $J(z)$ and $L(d)$, the domains of both functions are not compact sets. A fix for this is to assume that the objective function $J$ has compact level sets. Assumption \ref{assumption: compact} and Lemma \ref{lemma: compact} will rectify this. 

    \item On page 21, the statement \textbf{``A particular advantage of this approach is that it always produces a strictly complementary solution.''} is misleading. The strictly complementarity property is unrelated to the solution method for the convex subproblems via an interior-point method.
     
    \item In Theorem 4.7, on page 25, the statement \textbf{``Given a sequence \(\{z^k\}\) weakly converging to  \(\bar{z}\)''} is incorrect. The proof of Theorem 4.7 requires the convergence of the entire sequence \(z^k\) to \(\bar{z}\), rather than just the convergence of a subsequence. We can correct this by replacing the weak convergence with strong convergence. 
\end{itemize}

Next, we list the more serious errors.
\begin{itemize}
    \item Equation (3.33), on page 17, cannot result from (3.15) and (3.18) in Lemma 3.11.
    \item On page 26, the statement \textbf{``From (3.19) in the proof of Lemma 3.11, $\rho^k \rightarrow 1$...''} is incorrect.
\end{itemize}

Both errors come from misinterpretations of the results in Lemma 3.11. The premise for the validity of the conclusions in Lemma 3.11 is that the iterates are confined to a neighborhood of a nonstationary point. However, the iterates generated by {\scvx}, upon convergence, approach a stationary point. Therefore, Lemma 3.11 does not apply to the proofs of both Condition 3.17 and Theorem 4.7.

The second error can be remedied: we refer the reader to \cite[Theorem 2.1]{zhang1989superlinear} for a resolution. However, the first error cannot be rectified easily, causing the entire Section 3.2 on strong convergence guarantee to be incorrect. In the subsequent section of this note, we will use a new method to prove strong convergence. This method, however, requires tighter assumptions. Thus, we do not provide a complete fix for the proof of strong convergence presented in \citet[Section 3.2]{mao2018scvx}.

\section{Proof of Strong Convergence}
We follow the same notation as presented in Section 4 of \citet{mao2018scvx}. The objective function can be expressed as:
\begin{equation}
    J(z) = \psi(G(z))
\end{equation}
where \(\psi\) represents a convex function and \(G\) is a $C^1$ smooth function. Let $z^k$ be a sequence generated by {\scvx} and $z^0$ be the initial item of the sequence $z^k$. To ensure that the iterations \(z^k\) remain bounded, we impose the following assumption:
\begin{assumption}
\label{assumption: compact}
 The set \(\{z \mid J(z) \leq J(z^0)\}\) is compact.   
\end{assumption}

\begin{lemma}
\label{lemma: compact}
Given that $\{z \mid J(z) \leq J(z^0)\}$ is compact, iteration $z^k$ is restricted to this compact set and $\psi, G$, and $\nabla G$ are Lipschitz continuous on this compact set.
\end{lemma}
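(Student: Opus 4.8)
The statement has two logically independent parts, and the plan is to treat them separately. The first part asserts that every iterate lies in the sublevel set $K := \{z \mid J(z) \le J(z^0)\}$, which is compact by Assumption~\ref{assumption: compact}; the second asserts Lipschitz regularity of $\psi$, $G$, and $\nabla G$ on $K$.

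For the first part, the plan is to invoke the \emph{monotone descent property} built into the {\scvx} step-acceptance rule. At each iteration {\scvx} either accepts a trial step, which is admitted only when it produces a sufficient decrease in the penalized objective $J$, or rejects it and leaves the iterate unchanged while contracting the trust region. In either case $J(z^{k+1}) \le J(z^k)$, so by induction from $z^0$ we get $J(z^k) \le J(z^0)$ for every $k$, i.e. $z^k \in K$. This step reduces to citing the trial-point reduction test of Section~4 in \citet{mao2018scvx}; no new estimate is required.

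For the Lipschitz claims, the plan is to exploit compactness of $K$ together with the structural hypotheses on $\psi$ and $G$. Since $G$ is continuous and $K$ is compact, the image $G(K)$ is compact; a finite-valued convex function is locally Lipschitz on its (open) domain and hence Lipschitz on the compact set $G(K)$, which settles the claim for $\psi$. For $G$ itself, $\nabla G$ is continuous and therefore bounded on $K$, and the mean value inequality yields a Lipschitz bound. The one technical point is that $K$ need not be convex, so I would first enlarge $K$ to a compact neighborhood on which $\nabla G$ stays bounded, argue along segments for nearby pairs of points, and combine this with the boundedness of $G$ on $K$ to cover the far pairs, as in the standard proof that a $C^1$ map is Lipschitz on compacta.

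The main obstacle is the Lipschitz continuity of $\nabla G$: a $C^1$ hypothesis gives only continuity, hence uniform continuity and boundedness, of $\nabla G$ on $K$, not a Lipschitz modulus. To obtain a genuine Lipschitz constant one needs $\nabla G$ to be locally Lipschitz, i.e. $G \in C^{1,1}$ on a neighborhood of $K$ (for which $C^2$ is sufficient), after which the same covering-and-segment argument used for $G$ applies verbatim. I would therefore read the ambient smoothness assumption carried over from {\scvx} as supplying $G \in C^{1,1}$, or state this explicitly as the hypothesis under which the lemma holds; with that in place the remaining estimates are routine.
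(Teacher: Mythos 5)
Your proposal is correct, and it is considerably more complete than what the paper actually provides. The paper's entire proof reads: ``This result is straightforward, as $J(z^k)$ is a decreasing sequence due to the design of the {\scvx} algorithm.'' That one line covers only the first half of the lemma --- your monotone-descent induction ($J(z^{k+1}) \leq J(z^k)$, hence $z^k \in \{z \mid J(z) \leq J(z^0)\}$ for all $k$) is exactly this argument, just spelled out. The Lipschitz claims for $\psi$, $G$, and $\nabla G$ are not argued in the paper at all; your treatment (local Lipschitz continuity of the finite-valued convex function $\psi$ on the compact image $G(K)$, boundedness of $\nabla G$ plus a covering-and-segment argument for $G$, with the correct caveat that $K$ need not be convex) supplies the substance the paper omits.

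More importantly, your final observation identifies a genuine defect in the lemma as stated rather than a gap in your own proof: the paper assumes only that $G$ is $C^1$, and a $C^1$ hypothesis yields continuity --- hence boundedness and uniform continuity --- of $\nabla G$ on the compact set, but \emph{not} a Lipschitz modulus for $\nabla G$. Standard counterexamples (e.g.\ a gradient behaving like $t^{3/2}\sin(1/t)$ near a point of $K$) show the claim can fail. So the Lipschitz continuity of $\nabla G$ asserted in the lemma is unprovable from the paper's stated hypotheses, and your remedy --- reading the ambient smoothness as $G \in C^{1,1}$ (for which $C^2$ suffices) on a neighborhood of $K$, or stating that hypothesis explicitly --- is the right fix. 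Note that the downstream use of the lemma (inequality \eqref{equ: 3} in Theorem \ref{thm: minimum trust region solution}) does rely on this Lipschitz property of $\nabla G$, so the strengthened hypothesis is not cosmetic; the paper silently needs it too.
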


\begin{proof}
This result is straightforward, as $J(z^k)$ is a decreasing sequence due to the design of the {\scvx} algorithm.
\end{proof}

Since \( z^k \) is bounded, a subsequence of \( z^k \) will, by compactness, converge to a point \( \bar{z} \). The aim of this section is to prove that \( \lim_{k \to \infty} z^k = \bar{z} \) for the whole sequence. To achieve this, we rely on the results from Lemma 4.5 and Lemma 4.6 in \cite{mao2018scvx}.

Suppose the assumptions in Lemma 4.5 are satisfied at \(\bar 
z\), Lemma 4.5 guarantees that there exist $\beta>0$ and $\delta>0$ such that, for all $z \in N(\bar{z}, \delta):=\{z \mid\|z-\bar{z}\| \leq \delta\}$, we have:
\begin{equation}
\label{equ: local J}
J(z)-J(\bar{z}) \geq \beta\|z-\bar{z}\|,
\end{equation} 

Under the same assumption in Lemma 4.5, Lemma 4.6 indicates that there exists $\gamma > 0$, such that:
\begin{equation*}
\psi\left(G(\bar{z}) + \nabla G(\bar{z})^\top d\right) \geq \psi(G(\bar{z})) + \gamma\|d\|, \quad \forall d \in \mathbb{R}^{n_z}.
\end{equation*}

Notice that \eqref{equ: local J} is only valid in a neighborhood of \( \bar{z} \). We aim to show that when \( z^k \) is sufficiently close to \( \bar{z} \), \( z^{k+1} \) will be within the neighborhood that validates \eqref{equ: local J}. The following theorem demonstrates that the update \( d_k \) computed by solving the subproblem at \( z^k \) will be sufficiently small, regardless of the trust-region radius. We will later use this to prove the validity of \eqref{equ: local J} for \( z^{k+1} \).

\begin{theorem}
\label{thm: minimum trust region solution}
Suppose $\bar{z}$ is a feasible stationary point that follows Assumption 4.1 and Assumption 4.2. For any $\epsilon > 0$, there exists $\eta > 0$ such that for any $z^k$ with $\|z^k - \bar{z}\| < \eta$, the solution of the trust-region subproblem ${d}^k$ will satisfy $\|{d}^k\| < \epsilon$.
\end{theorem}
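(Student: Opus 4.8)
The plan is to exploit the sharp-minimum (linear growth) structure of the convexified subproblem cost, which is exactly what Lemma 4.6 supplies, together with the trivial fact that $d=0$ is feasible for the trust region at every radius. Write the subproblem cost at the current iterate as $\ell_k(d) := \psi\big(G(z^k) + \nabla G(z^k)^\top d\big)$ and the corresponding cost at the accumulation point as $\ell(d) := \psi\big(G(\bar z) + \nabla G(\bar z)^\top d\big)$, so that Lemma 4.6 reads as the global lower bound $\ell(d) - \ell(0) \ge \gamma\|d\|$ for all $d$. The intuition is that $d^k$ minimizes $\ell_k$ over the trust region and hence cannot beat the feasible point $d=0$; since $\ell$ grows at least linearly away from its minimizer $d=0$ and $\ell_k$ is uniformly close to $\ell$ when $z^k$ is near $\bar z$, any sizeable step $d^k$ is penalized, forcing $\|d^k\|$ to be small.

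First I would record the optimality inequality $\ell_k(d^k) \le \ell_k(0)$. This holds because $d=0$ lies in the trust region for \emph{every} radius $r^k$, which is where the phrase ``regardless of the trust-region radius'' enters: the constraint never binds in the argument. (If the subproblem instead carries a nonnegative trust-region penalty vanishing at $d=0$, the same inequality survives a fortiori.) Next I would quantify how close $\ell_k$ is to $\ell$ using Lemma \ref{lemma: compact}: since $\psi$, $G$, and $\nabla G$ are Lipschitz on the compact level set with constants $L_\psi, L_G, L_{\nabla G}$, a direct triangle-inequality estimate gives the pointwise bound $|\ell_k(d) - \ell(d)| \le L_\psi\|z^k - \bar z\|\,(L_G + L_{\nabla G}\|d\|)$ for all $d$.

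Then I would chain these together. Applying the growth bound at $d = d^k$ and inserting $\ell_k$ by the perturbation estimate,
\begin{equation*}
\gamma\|d^k\| \le \ell(d^k) - \ell(0) = \big[\ell(d^k) - \ell_k(d^k)\big] + \big[\ell_k(d^k) - \ell_k(0)\big] + \big[\ell_k(0) - \ell(0)\big],
\end{equation*}
where the middle bracket is nonpositive by optimality and the two outer brackets are controlled by the Lipschitz estimate evaluated at $d^k$ and at $0$. This yields
\begin{equation*}
\gamma\|d^k\| \le 2 L_\psi L_G\|z^k - \bar z\| + L_\psi L_{\nabla G}\|z^k - \bar z\|\,\|d^k\|.
\end{equation*}

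The main obstacle, and the only real subtlety, is the cross term $L_\psi L_{\nabla G}\|z^k - \bar z\|\,\|d^k\|$, which carries the unknown $\|d^k\|$ on the right-hand side and reflects the fact that the linearization error grows with the step length. The remedy is to first shrink $\eta$ so that $L_\psi L_{\nabla G}\,\eta \le \gamma/2$; then for $\|z^k - \bar z\| < \eta$ this term is absorbed into the left-hand side, producing the clean linear bound $\|d^k\| \le (4 L_\psi L_G/\gamma)\,\|z^k - \bar z\|$. The theorem follows by taking $\eta := \min\{\gamma\epsilon/(4 L_\psi L_G),\, \gamma/(2 L_\psi L_{\nabla G})\}$, so that $\|z^k - \bar z\| < \eta$ forces $\|d^k\| < \epsilon$. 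I would remark that the argument in fact delivers the stronger conclusion that $\|d^k\|$ shrinks at least linearly in $\|z^k - \bar z\|$, which is what is needed downstream to guarantee that $z^{k+1}$ re-enters the neighborhood where \eqref{equ: local J} is valid.
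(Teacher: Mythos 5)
Your argument is correct in substance and rests on the same two pillars as the paper's proof: the sharp-growth bound of Lemma 4.6 at $\bar z$ and a Lipschitz perturbation estimate for the linearized cost when $z^k$ is near $\bar z$; in fact your three-bracket identity is precisely the paper's decomposition \eqref{equ: 1}, just evaluated at $d = d^k$. Where you diverge is the finishing move. The paper never touches $d^k$ directly: it shows that on the sphere $\|d\| = \epsilon$ the subproblem cost exceeds its value at $d=0$ by at least $\gamma\epsilon/2$, and then uses convexity of $d \mapsto \psi(G(z^k) + \nabla G(z^k) d)$ to trap every minimizer (unconstrained, and hence also the trust-region solution, since $0$ is feasible) inside that ball. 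You instead combine the optimality inequality $\ell_k(d^k) \le \ell_k(0)$ with the perturbation bound at $d^k$ itself and absorb the cross term $L_\psi L_{\nabla G}\|z^k - \bar z\|\,\|d^k\|$ into the left-hand side. Your route buys a genuinely stronger conclusion: the explicit linear bound $\|d^k\| \le (4 L_\psi L_G/\gamma)\|z^k - \bar z\|$, whereas the paper only concludes $\|d^k\| \le \epsilon$.

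The price is one subtlety you should flag. Your pointwise estimate $|\ell_k(d) - \ell(d)| \le L_\psi\|z^k - \bar z\|(L_G + L_{\nabla G}\|d\|)$ is asserted for all $d$ and then applied at $d = d^k$, whose norm is not a priori bounded (the trust-region radius in {\scvx} can grow across iterations). Lemma \ref{lemma: compact} only provides Lipschitz continuity on the compact level set, and the arguments $G(z^k) + \nabla G(z^k)^\top d$ leave any fixed bounded set as $\|d\| \to \infty$; so you are implicitly assuming that $\psi$ is \emph{globally} Lipschitz. This happens to hold in the {\scvx} setting, where $\psi$ is a linear cost plus an exact positive-part/$\ell_1$ penalty and hence piecewise linear, but it is an extra hypothesis relative to the paper's argument, which needs the perturbation estimate only on the bounded sphere $\|d\| = \epsilon$ --- that is exactly what the convexity trap is for. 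If you want to avoid the global hypothesis, rescale: if $\|d^k\| > \epsilon$, the point $\tilde d = (\epsilon/\|d^k\|) d^k$ still satisfies $\ell_k(\tilde d) \le \ell_k(0)$ by convexity, and running your chain of inequalities at $\tilde d$ (whose norm is exactly $\epsilon$) yields a contradiction once $\|z^k - \bar z\|$ is small; this recovers the paper's proof and retains your quantitative bound for the iterates that matter.
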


\begin{proof}
Consider the following equation:
\begin{align}
\psi(G(z^k) + \nabla G(z^k) d) - \psi(G(z^k)) ={} & \psi(G(z^k) + \nabla G(z^k) d) - \psi(G(\bar{z}) + \nabla G(\bar{z}) d) + \psi(G(\bar{z}) + \nabla G(\bar{z}) d) \label{equ: 1}\\
& - \psi(G(\bar{z})) + \psi(G(\bar{z})) - \psi(G(z^k)).\nonumber         
\end{align}
By Lemma 4.6, we have:
\begin{equation}
\label{equ: 2}
\psi\left(G\left(\bar{z}\right) + \nabla G\left(\bar{z}\right) d\right) - \psi\left(G\left(\bar{z}\right)\right) \geq \gamma\|d\| \text{ for all } d.
\end{equation}

By the Lipschitz continuity of $\psi, G$, and $\nabla G$ on the compact set \(\{z \mid J(z) \leq J(z^0)\}\), there exists $\eta$, such that for all $\|z^k - \bar{z}\| < \eta$, we have:
\begin{equation}
\label{equ: 3}
\begin{aligned}
& \|\psi(G(z^k) + \nabla G(z^k) d) - \psi(G(\bar{z}) + \nabla G(\bar{z}) d) + \psi(G(\bar{z})) - \psi(G(z^k))\| \leq \frac{\gamma}{2} \epsilon \quad \text{for any } \|d\| = \epsilon.
\end{aligned}
\end{equation}

Plugging \eqref{equ: 2} and \eqref{equ: 3} into \eqref{equ: 1}, we have:
\begin{equation}
\psi\left(G\left(z^k\right) + \nabla G\left(z^k\right) d\right) - \psi\left(G\left(z^k\right)\right) \geq \frac{\gamma}{2} \epsilon  \quad \text{for any } \|d\| = \epsilon.
\end{equation}

Since $\psi\left(G\left(z^k\right) + \nabla G\left(z^k\right) d\right)$ is a convex function of $d$, the minimizer of this function, $\bar{d}^k$, will satisfy that $\|\bar{d}^k\| \leq \epsilon$. Thus, the solution $d^k$ of the constrained trust-region subproblem will have:
\[
\|d^k\| \leq \|\bar{d}^k\| \leq \epsilon.\]\end{proof}

With Theorem \ref{thm: minimum trust region solution}, we are now able to show that \( z^{k+1} \) is not far from \( z^k \) when the latter is close to \( \bar{z} \). Consequently, \eqref{equ: local J} will be proven to be valid at \( z^{k+1} \), and we will have:
\[
\|z^{k+1} - \bar{z}\| \leq \frac{J(z^{k+1}) - J(\bar{z})}{\gamma}. 
\]
The above expression provides an upper bound on \( \|z^{k+1} - \bar{z}\| \). This allows us to demonstrate that \( z^{k+1} \) also satisfies the assumption in Theorem \ref{thm: minimum trust region solution}. Meanwhile, the distance to \( \bar{z} \) is bounded as the function \( J(z^k) \) decreases over iterations. We will then use induction to establish the convergence of \( z^k \).

\begin{theorem}
    Let $\bar{z}$ be a limit point of the sequence $z^k$. Suppose that $\bar{z}$ satisfies the assumptions required in Lemma 4.5 and $J(\bar{z}) > -\infty$. Additionally, suppose that Assumption \ref{assumption: compact} holds. Then $\lim_{k \to \infty} z^k = \bar{z}$.
\end{theorem}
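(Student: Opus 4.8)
The plan is to first pin down the limiting objective value and then trap the tail of the sequence inside the neighborhood $N(\bar z,\delta)$ on which the sharp-minimum bound \eqref{equ: local J} is valid. Since $J(z^k)$ is monotonically decreasing (Lemma \ref{lemma: compact}) and the subsequence converging to $\bar z$ forces $J(z^{k_j})\to J(\bar z)$ by continuity of $\psi$ and $G$, the full monotone sequence must satisfy $J(z^k)\downarrow J(\bar z)$; in particular $J(z^k)\ge J(\bar z)$ for every $k$ and $J(z^k)-J(\bar z)\to 0$. This is the quantity that \eqref{equ: local J} will convert into a bound on $\|z^k-\bar z\|$, so controlling it is what ultimately drives convergence.

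Next I would fix the constants. Let $\delta,\beta>0$ be as in \eqref{equ: local J}. Choose $\epsilon\le\delta/2$ and invoke Theorem \ref{thm: minimum trust region solution} to obtain $\eta_0>0$ so that $\|z^k-\bar z\|<\eta_0$ forces the subproblem step to satisfy $\|d^k\|<\epsilon$; then shrink to $\eta:=\min\{\eta_0,\delta/2\}$ so that $\eta+\epsilon\le\delta$. Because $J(z^k)-J(\bar z)\to 0$, pick $K_1$ with $(J(z^k)-J(\bar z))/\beta<\eta$ for all $k\ge K_1$, and because $\bar z$ is a limit point, pick $K_2\ge K_1$ with $\|z^{K_2}-\bar z\|<\eta$.

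The heart of the argument is an induction showing $\|z^k-\bar z\|<\eta$ for every $k\ge K_2$. Assuming this for some $k\ge K_2$, Theorem \ref{thm: minimum trust region solution} gives $\|z^{k+1}-z^k\|\le\|d^k\|<\epsilon$ (the estimate also covers a rejected step, where $z^{k+1}=z^k$), so $\|z^{k+1}-\bar z\|<\eta+\epsilon\le\delta$ and hence $z^{k+1}\in N(\bar z,\delta)$. Now \eqref{equ: local J} applies at $z^{k+1}$ and yields
\[
\|z^{k+1}-\bar z\|\le\frac{J(z^{k+1})-J(\bar z)}{\beta}<\eta,
\]
using $k+1\ge K_1$, which closes the induction. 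The same display, valid for all $k\ge K_2$, gives $\|z^k-\bar z\|\le (J(z^k)-J(\bar z))/\beta\to 0$, hence $z^k\to\bar z$.

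The step I expect to be the crux is closing the induction: the crude triangle-inequality bound $\|z^{k+1}-\bar z\|\le\eta+\epsilon$ only keeps the iterate within $\delta$, not within the smaller radius $\eta$ needed to reapply Theorem \ref{thm: minimum trust region solution}. It is exactly the sharp-minimum inequality \eqref{equ: local J}, combined with $J(z^k)\downarrow J(\bar z)$, that refines $\delta$ back down to $\eta$ and prevents the iterates from drifting away; verifying that $z^{k+1}$ genuinely lands in $N(\bar z,\delta)$, so that \eqref{equ: local J} may legitimately be invoked there, is the delicate point, and it is precisely what Theorem \ref{thm: minimum trust region solution} was designed to supply.
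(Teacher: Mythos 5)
Your proposal is correct and takes essentially the same route as the paper's own proof: the same induction in which Theorem \ref{thm: minimum trust region solution} keeps $z^{k+1}$ inside $N(\bar z,\delta)$ via the triangle inequality, and the sharp-minimum bound \eqref{equ: local J} combined with the monotone decrease of $J(z^k)$ pulls the iterate back into the smaller radius. The only cosmetic difference is the conclusion: the paper re-runs the induction for arbitrarily small $\eta_1$, whereas you fix the neighborhood once and let $\|z^k-\bar z\|\le \bigl(J(z^k)-J(\bar z)\bigr)/\beta\to 0$ finish the argument; both are valid.
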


\begin{proof}
Theorem 3.13, on page 14, ensures that $\bar z$ is a stationary point of $J(\cdot).$

Under Assumptions 3.5, 4.1, and 4.2, Lemma 4.5 ensures that there exists $\gamma > 0$ and $\delta > 0$ such that, for all $z \in N(\bar{z}, \delta) := \{z \mid \|z - \bar{z}\| \leq \delta\}$, we have:
\begin{equation*}
J(z) - J(\bar{z}) \geq \gamma\|z - \bar{z}\|.
\end{equation*}
Meanwhile, due to the continuity of $\psi, G,$ and $\nabla G$, $J$ is Lipschitz continuous on the compact set \(\{z \mid J(z) \leq J(z^0)\}\). Furthermore, since $J(z^k)$ is a decreasing sequence of $k$, we have $J(\bar z) = \inf_k J(z^k).$

According to Theorem \ref{thm: minimum trust region solution}, for $\epsilon = \delta/2 > 0$, there exists $\eta > 0$ such that for any $z^k$ with $\|z^k - \bar{z}\| < \eta$, the solution of the constrained trust-region subproblem, $d^k$, will satisfy $\|d^k\| < \delta/2$.

Next, we use mathematical induction to show that $\lim_{k \to \infty} z^k = \bar{z}$.

Since $\bar{z}$ is an accumulation point of the iterates of {\scvx}, for arbitrarily small $\eta_1 < \eta$, there exists a $k \in \mathbb{N}$ satisfying: 
\begin{equation}
\|z^k - \bar{z}\| \leq \min(\eta_1, \delta/2),
\end{equation}
and: 
\begin{equation}
\frac{J(z^k) - J(\bar{z})}{\gamma} \leq \min(\eta_1, \delta/2).
\end{equation}

The first inequality holds because $\bar{z}$ is an accumulation point. The second inequality is guaranteed as $J$ is a continuous function. 

Now, we prove that for $z^{k+1}$ satisfying:
\begin{equation}
\|z^{k+1} - \bar{z}\| \leq \min(\eta_1, \delta/2)
\end{equation}
and:
\begin{equation}
\label{equ: J decreasing new}
\frac{J(z^{k+1}) - J(\bar{z})}{\gamma} \leq \min(\eta_1, \delta/2).
\end{equation}

First, since the trust-region ratio $\rho^k0$ is positive, we have:
\begin{equation}
J(\bar{z}) \leq J(z^{k+1}) < J(z^k).
\end{equation}

Therefore, \eqref{equ: J decreasing new} is automatically true. 

Next, we observe that:
\begin{equation}
\|z^{k+1} - \bar{z}\| < \|z^k - \bar{z}\| + \|z^{k+1} - z^k\| = \|z^k - \bar{z}\| + \|d^k\| \leq \delta/2 + \delta/2  = \delta.
\end{equation}
So, Lemma 4.5 is valid, and thus:
\begin{equation*}
J(z^{k+1}) - J(\bar{z}) \geq \gamma\|z^{k+1} - \bar{z}\|.
\end{equation*}
Thus \eqref{equ: J decreasing new} indicates that:
\begin{equation}
\|z^{k+1} - \bar{z}\| \leq \frac{J(z^{k+1}) - J(\bar{z})}{\gamma} \leq \min(\eta_1, \delta/2).
\end{equation}
By induction, for all $n \in \mathbb{N}$, we have:
\begin{equation}
\|z^{k+n} - \bar{z}\| \leq \min(\eta_1, \delta/2)
\end{equation}
and:
\begin{equation}
\frac{J(z^{k+n}) - J(\bar{z})}{\gamma} \leq \min(\eta_1, \delta/2).
\end{equation}

Since we are able to pick $\eta_1$ arbitrarily small, we have $\lim_{k \to \infty} z^k = \bar{z}$.\end{proof}
\begin{remark}
We want to highlight that Assumption 4.2 in \citet{mao2018scvx} is rather strict. This is due to the binding requirement that 
\[ \left|I_{ac}(\bar{z})\right| + \left|J_{ac}(\bar{z})\right| \geq n_u(N-1), \]
and when connected with LICQ, we need this inequality to become an equality. It implies that we must activate exactly \( n_u(N-1) \) inequalities, which is often difficult to accomplish. Even when the Bang-Bang principle is applicable, the presence of discretization means that we cannot ensure that exactly \( n_u(N-1) \) inequalities will be activated.

\end{remark}
\bibliographystyle{plainnat}
\bibliography{reference}

\end{document}